

\documentclass[12pt]{article} 

\usepackage[utf8]{inputenc} 
\usepackage{bbm}



\usepackage{graphicx} 

 \usepackage[parfill]{parskip} 
 
\usepackage{booktabs} 
\usepackage{array} 
\usepackage{paralist} 
\usepackage{verbatim} 
\usepackage{subfig} 
\usepackage{mathrsfs}
\usepackage{amssymb}
\usepackage{amsthm}
\usepackage{amsmath,amsfonts,amssymb,esint}
\usepackage{graphics,color}
\usepackage{enumerate}
\usepackage{mathtools}

\pagestyle{plain} 


%

\newtheorem{lemma}{Lemma}
\newtheorem{theorem}{Theorem}
\newtheorem{corollary}{Corollary}
\newtheorem{proposition}{Proposition}
\newtheorem{remark}{Remark}


\usepackage{ color, graphicx}
\usepackage{mathrsfs, dsfont}

\evensidemargin 0.0in \oddsidemargin 0.0in \textwidth 6.5in
\topmargin  -0.2in \textheight  9.0in \overfullrule = 0pt
\usepackage{hyperref}


\setcounter{footnote}{1}

\newcommand{\RR}{\mathbb{R}}

\newcommand\nc{\newcommand}
\nc\hd{\widehat{D}}
\nc\be{\begin{equation}}
\nc\ee{\end{equation}}
\nc\kp{\kappa}
\nc\8{\infty}

%


\DeclareMathSymbol{\Gamma}{\mathalpha}{letters}{"00}
\DeclareMathSymbol{\Theta}{\mathalpha}{letters}{"02}
\DeclareMathSymbol{\Lambda}{\mathalpha}{letters}{"03}
\DeclareMathSymbol{\Omega}{\mathalpha}{letters}{"0A}
\definecolor{cr}{rgb}{1,0,0}


\title{Analysis of the (CR) equation in higher dimensions}
\author{T. Buckmaster, P. Germain, Z. Hani, J. Shatah}

\begin{document}
\maketitle

\begin{abstract}
This paper is devoted to the analysis of the continuous resonant (CR)
equation, in dimensions greater than 2. This equation arises as the large
box (or high frequency) limit of the nonlinear Schrodinger equation on the
torus, and was derived in a companion paper by the same authors. We initiate the investigation of the structure of (CR), its local
well-posedness, and the existence of stationary waves.
\end{abstract}
\section{Introduction}

The purpose of this paper is to investigate various properties of the \emph{continuous resonant} (CR) equation, derived in the companion paper \cite{BGHS}. We start with a brief description of this equation.

\subsection{Presentation of the equation}

The~\eqref{CR} equation, derived in~\cite{BGHS} (following~\cite{FGH}) reads
\begin{equation}
\tag{CR} \label{CR} \left\{
\begin{array}{l}
i\partial_t g(t, \xi) = \mathcal {T}(g,g,g)(t, \xi)\qquad (t, \xi) \in \RR\times \RR^d \\
\displaystyle \mathcal T(g,g,g)(\xi)= \iiint_{(\RR^d)^3} g(\xi_1) \overline{g(\xi_2)} g(\xi_3) \delta_{\RR^d}(\xi_1-\xi_2+\xi_3-\xi) \delta_{\RR}(\Omega) \, d\xi_1 d\xi_2 d\xi_3,
\end{array} \right.
\end{equation}
where 
$$
\Omega = |\xi_1|^2 - |\xi_2|^2 + |\xi_3|^2 - |\xi|^2.
$$

This equation describes the effective dynamics of the nonlinear Schr\"odinger equation 
\begin{equation}
\label{NLS} \tag{NLS}
i\partial_t u - \Delta u = |u|^2 u\,,
\end{equation}
set on a box of size $L$ (with periodic boundary conditions) in the limit of large $L$ and small initial data. Equivalently, it also gives the effective dynamics of the same equation posed on the unit torus $\mathbb T^d$ in the high-frequency limit.
Thus, \eqref{CR} is one of the few  simpler models which could help understand of the dynamics of~\eqref{NLS} on compact domains.

In fact, the companion paper \cite{BGHS} derives such continuous resonant effective models for (NLS) with any analytic power nonlinearity (where $|u|^2u$ is replaced by $|u|^{2p}u$ for any positive integer $p$). We shall restrict our analysis in this paper to the cubic case $p=1$, mainly for concreteness purposes. We should note that this equation was derived for $d=2$ in \cite{FGH}, and later analyzed in \cite{GHT1, GHT2}. The pair $(d, p)=(2, 1)$ (where $p$ is the power of the nonlinearity $|u|^{2p}u$) corresponds to the mass-critical case for (NLS). In this case, which also holds for $(d, p)=(1, 2)$, the equation (CR) has very special and surprising properties, like being invariant under the Fourier transform and ``commuting" with the quantum harmonic oscillator \cite{FGH, GHT1}. Moreover, the same equation turns out to give the effective dynamics for various physical systems such as NLS with harmonic trapping \cite{HaniThomann}, the dispersion-managed NLS equation, or lowest-Landau-level dynamics \cite{PG}. 
It is an interesting question to understand which of the rich properties that hold for (CR) in $d=2$ generalize to higher dimensions. This is one of the purposes of this paper. 

\subsection{Properties} We shall see that (CR) is Hamiltonian with energy functional given by
$$
\mathcal H(g)=\frac{(2\pi)^{d-1}}{2} \int_{\RR_s}\int_{\RR^d_x} \left| e^{is\Delta} \check g\right|^4 dx\, ds.
$$

Moreover, the equation satisfies a handful of symmetries, some are inherited from (NLS), and some are new. These translate into conservation laws. In contrast to the 2D case, the kinetic energy  
$$
\int |\nabla_\xi g(t, \xi)|^2 \, d\xi,
$$
which actually corresponds to the quantity $\int x^2|e^{-it\Delta} u|^2 \, dx$ for \eqref{NLS}, is not conserved for $d\geq 3$. In fact, we shall see that it satisfies the following ``virial-type" identity 
$$
\frac{d}{dt} \| \nabla g \|_{L^2}^2 = 2 (2-d) (2\pi)^{d-1} \iint s |e^{is \Delta} \check g|^4 \,dx\,ds.
$$
This is proved in Section \ref{viriel section}.

In terms of well-posedness, this is directly related to the boundedness properties of the operator $\mathcal T$ in \eqref{CR}. In Proposition \ref{bddness}, we study such properties for a bunch of function spaces. This directly translates into local well-posedness in those function spaces. It should be noted that this includes the function spaces which were used in our companion paper \cite{BGHS} to prove the rigorous approximation result between the (CR) dynamics and that of (NLS).

%



\subsubsection{Stationary solutions} It was noticed in \cite{FGH, GHT1}, equation (CR) enjoys a wealth of stationary solutions when $d=2$. This motivated us to study stationary solutions for this equation in higher dimensions. Here, we construct them by variational methods as maximizers of the Hamiltonian functional with fixed mass and/or weighted $L^2$ norm. We should point out that we only took the first steps in this study and many remaining interesting questions remain open for investigation. 

\subsection{Notations}

The Fourier transform and the Fourier transform of a function on $\mathbb{R}^d$ are given by, respectively,
\begin{align*}
& \mathcal{F} f(\xi) = \widehat{f}(\xi) = \frac{1}{(2\pi)^{d/2}} \int_{\mathbb{R}^d} e^{-ix\xi} f(x) \, dx \\
& \mathcal{F}^{-1} f(x) = \check{f}(x) =  \frac{1}{(2\pi)^{d/2}} \int_{\mathbb{R}^d} e^{ix\xi} f(\xi) \, d\xi.
\end{align*}
The function spaces $L^{p,s} = L^{p,s}(\mathbb{R}^d)$, $\dot L^{p,s} = \dot L^{p,s}(\mathbb{R}^d)$, $W^{p,s} = W^{p,s}(\mathbb{R}^d)$, $X^{\ell, N}=X^{\ell, N}(\mathbb{R}^d)$ are given by their norms
\begin{align*}
& \| f \|_{L^{p,s}} = \| \langle x \rangle^s f(x) \|_{L^p} \\
& \| f \|_{\dot L^{p,s}} = \| |x|^s f(x) \|_{L^p} \\
& \| f \|_{W^{p,s}} = \| \langle D \rangle^s f \|_{L^p}\\
&\|f\|_{X^{\ell,N}(\mathbb{R}^d)}=\sum_{0\leq |\alpha|\leq N} \|\nabla^\alpha f\|_{L^{\infty, \ell}}.
\end{align*}

\section{Structure of the equation} 

\subsection{The Hamiltonian}

In this subsection, we remain at a formal level, and discuss the structure of~\eqref{CR}. Notice first that it is Hamiltonian: it can be written
$$
i \dot g = \frac{\partial \mathcal H(g)}{\partial \overline g}
$$
with
$$ 
\mathcal H(g)= \frac{1}{2}\iiiint_{(\mathbb{R}^d)^4} g(\xi_1) \overline{g(\xi_2)} g(\xi_3) \overline{g(\xi_4)} \delta_{\RR^d}(\xi_1-\xi_2+\xi_3-\xi_4) \delta_{\RR}(\Omega) \, d\xi_1 d\xi_2 d\xi_3d\xi_4.
$$
We will also use the polarized version
$$
\mathcal H(g_1,g_2,g_3,g_4) =\frac{1}{2}\iiiint_{(\mathbb{R}^d)^4} g_1(\xi_1) \overline{g_2(\xi_2)} g_3(\xi_3) \overline{g_4(\xi_4)} \delta_{\RR^d}(\xi_1-\xi_2+\xi_3-\xi_4) \delta_{\RR}(\Omega) \, d\xi_1 d\xi_2 d\xi_3d\xi_4.
$$
First notice that, viewed in physical space, the Hamiltonian is nothing but the $L^4$ Strichartz norm.

\begin{proposition}
\label{physicalspace}
The Hamiltonian and its polarized version can be written
\begin{align*}
&\mathcal H(g)= \frac{(2\pi)^{d-1}}{2} \int_{\RR_s}\int_{\RR^d_x} \left| e^{is\Delta} \check g\right|^4 dx\, ds \\
& \mathcal H(g_1, g_2,g_3, g_4) = \frac{(2\pi)^{d-1}}{2} \int_{\RR_s}\int_{\RR^d_x} e^{is\Delta} \check g_1(x)\,\overline{e^{is\Delta}\check g_{2}(x)}\, e^{is\Delta} \check g_3(x)\,\overline{e^{is\Delta}\check g_{4}(x)}\, dx\, ds.
\end{align*}
As for the trilinear operator $\mathcal T$, it reads
$$
\mathcal T(g_1, g_2, g_3)=(2\pi)^{d-1} \mathcal F \int_{\RR_s}e^{-is\Delta}\left( e^{is\Delta} \check g_1(x)\overline{e^{is\Delta}\check g_2(x)}e^{is\Delta} \check g_3(x)\right)\,ds.
$$
\end{proposition}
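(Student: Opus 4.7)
The core idea is to replace both delta functions by their Fourier representations
\[
\delta_{\RR^d}(\eta) = \frac{1}{(2\pi)^d}\int_{\RR^d} e^{ix\cdot\eta}\,dx, \qquad \delta_\RR(\tau) = \frac{1}{2\pi}\int_\RR e^{is\tau}\,ds,
\]
and then recognize that the resulting integrand factors across the $\xi_j$ variables. First I would plug these into the definition of $\mathcal H(g_1,g_2,g_3,g_4)$ and, assuming enough decay on the $g_j$ to justify Fubini, rearrange to obtain
\[
\mathcal H(g_1,\dots,g_4) = \frac{1}{2(2\pi)^{d+1}} \int_\RR\!\int_{\RR^d} \prod_{j=1,3}\!\Big(\!\int g_j(\xi_j) e^{ix\cdot\xi_j + is|\xi_j|^2} d\xi_j\Big) \prod_{j=2,4}\!\overline{\Big(\!\int g_j(\xi_j) e^{ix\cdot\xi_j + is|\xi_j|^2} d\xi_j\Big)} \,dx\,ds.
\]

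The next step is to identify each inner integral as a free Schrödinger flow. With the paper's Fourier convention, a direct computation gives
\[
\int_{\RR^d} g_j(\xi) e^{ix\cdot\xi + is|\xi|^2}\,d\xi = (2\pi)^{d/2}\, e^{-is\Delta}\check g_j(x),
\]
since $e^{-is\Delta}$ corresponds to multiplication by $e^{is|\xi|^2}$ on the Fourier side. Substituting this into the previous display, the prefactor becomes $\frac{(2\pi)^{2d}}{2(2\pi)^{d+1}} = \frac{(2\pi)^{d-1}}{2}$, and after the change of variables $s\mapsto -s$ (which preserves the integral) one recovers the claimed formula for $\mathcal H(g_1,g_2,g_3,g_4)$; setting all $g_j = g$ gives $\mathcal H(g)$.

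For the trilinear operator, the argument is essentially the same, except that $\xi$ is now a free variable (not integrated) and only one delta function is replaced in full Fourier form. Keeping $\delta_{\RR^d}(\xi_1-\xi_2+\xi_3-\xi)$ written via its $x$-integral and $\delta_\RR(\Omega)$ via its $s$-integral, the factor $e^{-ix\cdot\xi - is|\xi|^2}$ survives outside the $\xi_j$ integrals. Recognizing the $x$-integral as $(2\pi)^{d/2}$ times a Fourier transform in $x$, and interpreting the residual $e^{-is|\xi|^2}$ in frequency as $e^{is\Delta}$ on the physical side (absorbed into the Fourier transform), one obtains
\[
\mathcal T(g_1,g_2,g_3) = (2\pi)^{d-1} \mathcal F \int_\RR e^{is\Delta}\Bigl(e^{-is\Delta}\check g_1\,\overline{e^{-is\Delta}\check g_2}\,e^{-is\Delta}\check g_3\Bigr) ds,
\]
and again $s\mapsto -s$ yields the stated expression.

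The only real subtlety lies in the justification of Fubini and of the formal manipulations with $\delta$: the proposition is stated at a formal level, so I would either assume $g_j \in \mathcal S(\RR^d)$ (where everything is absolutely convergent via Strichartz or stationary phase bounds on $e^{is\Delta}\check g_j$) or interpret the identities in the sense of tempered distributions and appeal to density. Tracking the powers of $2\pi$ under the chosen Fourier normalization is the other bookkeeping point to be careful about, but no conceptual obstacle is expected beyond these.
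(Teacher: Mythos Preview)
Your proposal is correct and follows essentially the same approach as the paper: both arguments rest on the Fourier representation of the delta functions and the identification of the resulting oscillatory integrals with the free Schr\"odinger flow. The only cosmetic difference is that the paper runs the computation in the opposite direction (starting from the physical-space expression and recovering the $\delta$-constraints), and it derives the formula for $\mathcal T$ as an immediate consequence of the polarized Hamiltonian identity rather than by a separate computation.
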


\begin{proof}
It suffices to prove the identity for the polarized Hamiltonian function. Using that $(2\pi)^{-1}\int_{\RR}e^{ix\xi}d\xi= \delta_{x=0}$, we obtain that 
\begin{align*}
&\int_{\RR_s}\int_{\RR^d_x} e^{is\Delta} \check g_1(x)\overline{e^{is\Delta}\check g_2(x)}e^{is\Delta} \check  g_3(x)\overline{e^{is\Delta}\check  g_4(x)}dx \,ds\\
& =(2\pi)^{-2d}\int_{\RR_s}\int_{\RR^d_x} \int_{\RR^{4n}} e^{ix(\eta_1-\eta_2+\eta_3-\eta_4)}e^{-is(\eta_1^2-\eta_2^2+\eta_3^2-\eta_4^2)} g_1(\eta_1) \overline{ g_2}(\eta_2) g_3(\eta_3) \overline{ g_4}(\eta_4) d\eta_1 \ldots d\eta_4 \,dx \,ds\\
& =(2\pi)^{-d}\int_{\RR_s} \int_{\RR^{4d}} e^{-is(\eta_1^2-\eta_2^2+\eta_3^2-\eta_4^2)}g_1(\eta_1) \overline{ g_2}(\eta_2) g_3(\eta_3) \overline{ g_4}(\eta_4)\delta_{\RR^d}(\eta_1-\eta_2+\eta_3-\eta_4) d\eta_1 \ldots d\eta_4 \,ds.\\
& =(2\pi)^{-d+1} \int_{\RR^{4d}} g_1(\eta_1) \overline{g_2}(\eta_2) g_3(\eta_3) \overline{g_4}(\eta_4)\delta_{\RR^d}(\eta_1-\eta_2+\eta_3-\eta_4) \delta_{\RR}(\eta_1^2-\eta_2^2+\eta_3^2-\eta_4^2) d\eta_1 \ldots d\eta_4.
\end{align*}
\end{proof}

\subsection{Symmetries and conserved quantities}

\begin{proposition}[Symmetries]
\label{symm}
The following symmetries leave the Hamiltonian $\mathcal{H}$ invariant:
\begin{itemize}
\item[(i)] Phase rotation $g \mapsto e^{i\theta} g$ for all $\theta \in \mathbb{R}$.
\item[(ii)] Translation: $g \mapsto g(\,\cdot\,+\xi_0)$ for any $\xi_0 \in \mathbb{R}$.
\item[(iii)] Modulation: $g \mapsto e^{i \xi \cdot x_0} g$ for any $x_0 \in \mathbb{R}^d$.
\item[(iv)] Quadratic modulation: $g \mapsto e^{i\tau |\xi|^2} g$ for any $\tau \in \mathbb{R}$.
\item[(v)] Rotation: $g \mapsto g(O \cdot)$ for any $O$ in the orthogonal group $O(d)$.
\item[(vi)] Scaling: $g \mapsto \mu^{\frac{3d-2}{4}} g(\mu \,\cdot\,)$ for any $\mu > 0$.
\end{itemize}
\end{proposition}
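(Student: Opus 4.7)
The plan is to verify each of the six invariances separately, choosing whichever representation of $\mathcal H$---the Fourier-side form with the two delta functions, or the physical-space Strichartz form from Proposition \ref{physicalspace}---makes the verification most transparent.

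For the two Fourier-side symmetries (i) and (ii), I would work directly with the quadrilinear integral. For (i), each factor in $g(\xi_1)\overline{g(\xi_2)}g(\xi_3)\overline{g(\xi_4)}$ picks up $e^{\pm i\theta}$ and the four phases cancel. For (ii), I would substitute $\xi_j\mapsto \xi_j+\xi_0$ in each of the four integration variables; the momentum delta $\delta_{\RR^d}(\xi_1-\xi_2+\xi_3-\xi_4)$ is unchanged since the shifts cancel in pairs, and expanding $|\xi_j+\xi_0|^2=|\xi_j|^2+2\xi_0\cdot\xi_j+|\xi_0|^2$ shows that $\Omega$ gains only a term proportional to $\xi_1-\xi_2+\xi_3-\xi_4$, which vanishes on the support of the first delta.

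For the physical-side symmetries (iii), (v), (vi), I would pass to the Strichartz form and use the intertwining of the inverse Fourier transform with the relevant symmetry. Fourier modulation $g\mapsto e^{i\xi\cdot x_0}g$ becomes physical translation $\check g\mapsto\check g(\cdot+x_0)$, which commutes with $e^{is\Delta}$ and under which the $L^4_{s,x}$ integral is obviously invariant. Similarly $g\mapsto g(O\cdot)$ transfers to $\check g\mapsto \check g(O\cdot)$, and since $\Delta$ commutes with orthogonal rotations and the $x$-integral over $\RR^d$ is rotation invariant, invariance follows. The scaling (vi) requires careful bookkeeping: a direct computation shows that on the physical side the symmetry becomes $\check g\mapsto \mu^{-(d+2)/4}\check g(\cdot/\mu)$; combining with the Schr\"odinger scaling $e^{is\Delta}[f(\cdot/\mu)](x)=(e^{i(s/\mu^2)\Delta}f)(x/\mu)$ and the changes of variables $y=x/\mu$, $\sigma=s/\mu^2$, the powers of $\mu$ balance as $-4\cdot\tfrac{d+2}{4}+d+2=0$. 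This is precisely what forces the exponent $\tfrac{3d-2}{4}$ on the Fourier side.

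Finally, the quadratic modulation (iv) is easiest on the Fourier side: the integrand acquires a factor $e^{i\tau(|\xi_1|^2-|\xi_2|^2+|\xi_3|^2-|\xi_4|^2)}=e^{i\tau\Omega}$, which is identically $1$ on the support of $\delta_{\RR}(\Omega)$. (Equivalently, on the physical side $\check g\mapsto e^{-i\tau\Delta}\check g$ and one shifts $s\mapsto s+\tau$ in the time integral.) The only mildly subtle step is bookkeeping the exponents in (vi); everything else reduces to invariance of Lebesgue measure under translations, rotations, or dilations, combined with the standard covariance of $e^{is\Delta}$.
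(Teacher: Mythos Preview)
Your verification is correct. The paper states this proposition without proof, so there is nothing to compare against; your argument is a clean direct check, and the bookkeeping in (vi) is done correctly on both the Fourier and physical sides.
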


The first five symmetries are canonical transformations (except in dimension 2, where the scaling is also canonical) which, by Noether's theorem, are associated to conserved quantities.

\begin{proposition}[Conserved quantities]\label{ConsQ}
The following quantities are conserved by the flow of~\eqref{CR}:
\begin{itemize}
\item[(i)] Mass: $\int |g(\xi)|^2\,d\xi$.
\item[(ii)] Position: $\int x \left| \check{g}(x) \right|^2\,dx$.
\item[(iii)] Momentum: $\int \xi |g(\xi)|^2 \,d\xi$.
\item[(iv)] Kinetic energy: $\int \xi^2 |g(\xi)|^2 \,d\xi$.
\item[(v)] Angular momentum: $\int (\xi_i \partial_{\xi_j} - \xi_j \partial_{\xi_i}) g(\xi) \overline{g(\xi)} \,d\xi$ for $i \neq j$.
\end{itemize}
\end{proposition}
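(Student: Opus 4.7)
My plan is to deduce each conservation law from the matching symmetry in Proposition \ref{symm} via a Noether-type argument adapted to the complex-Hamiltonian form of \eqref{CR}. Writing $i\partial_t g = \partial_{\bar g}\mathcal H(g) = \mathcal T(g,g,g)$, the principle I would use is: if $U_\epsilon$ is a one-parameter family of maps preserving $\mathcal H$ with infinitesimal generator $X(g) = \partial_\epsilon U_\epsilon g|_{\epsilon=0}$, then any real functional $Q$ satisfying $\partial_{\bar g} Q(g) = iX(g)$ is conserved along the flow. This is verified by the one-line computation
\begin{align*}
\tfrac{d}{dt} Q(g(t)) = 2\operatorname{Re}\int \partial_{\bar g} Q \cdot \overline{\partial_t g}\,d\xi = -2\operatorname{Re}\int X(g)\,\overline{\partial_{\bar g}\mathcal H(g)}\,d\xi = -d\mathcal H(g)[X(g)],
\end{align*}
which vanishes by invariance of $\mathcal H$ under $U_\epsilon$.

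Granted this principle, each of (i)--(v) reduces to computing the infinitesimal generator $X$ and then integrating $iX$ against $\bar g$ to read off $Q$. The generators of the symmetries (i)--(v) in Proposition \ref{symm} are $ig$, $\partial_{\xi_j}g$, $i\xi_j g$, $i|\xi|^2 g$, and $(\xi_i\partial_{\xi_j}-\xi_j\partial_{\xi_i})g$ respectively, yielding the mass $\int|g|^2 d\xi$, the position $i\int \partial_{\xi_j} g\,\bar g\,d\xi = \int x_j|\check g|^2 dx$ (using Plancherel together with $\mathcal F(x_j\check g)=i\partial_{\xi_j}g$), the momentum, the kinetic energy, and the angular momentum. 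For item (v) one checks via integration by parts (using $i\neq j$) that $\int(\xi_i\partial_{\xi_j}-\xi_j\partial_{\xi_i})g\,\bar g\,d\xi$ is pure imaginary, so that after multiplication by $i$ it defines a real conserved quantity matching the statement.

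The one step that genuinely requires verification is the invariance $\mathcal H(U_\epsilon g) = \mathcal H(g)$ itself, which I would read off the physical-space formula $\mathcal H(g) = \tfrac{(2\pi)^{d-1}}{2}\iint|e^{is\Delta}\check g|^4\, dx\,ds$ of Proposition \ref{physicalspace}. Phase rotation and Fourier modulation multiply $e^{is\Delta}\check g$ by a constant phase; Fourier translation becomes a spatial translation of $e^{is\Delta}\check g$; quadratic modulation $e^{i\tau|\xi|^2}$ coincides with the propagator $e^{-i\tau\Delta}$ on $\check g$, which merely shifts the $s$-variable; and orthogonal rotations rotate $x$ and $\xi$ simultaneously. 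Each invariance is then a one-line change of variables. I do not expect any genuine obstacle: all manipulations are routine on Schwartz data and extend by density to the natural function spaces, exploiting the boundedness of $\mathcal T$ from Proposition \ref{bddness}.
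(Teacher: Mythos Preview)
Your proposal is correct and follows exactly the route the paper indicates: the paper does not spell out a proof of Proposition~\ref{ConsQ} but simply remarks, immediately before it, that the first five symmetries of Proposition~\ref{symm} are canonical transformations and hence are associated to conserved quantities by Noether's theorem. Your argument is a clean, explicit implementation of that Noether mechanism in the complex-Hamiltonian setting, identifying the generator $X$ of each symmetry and checking $\partial_{\bar g}Q = iX(g)$ in each case; nothing further is needed.
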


Notice further that the first four symmetries of the Hamiltonian naturally translate into symmetries of the set of solutions of the equation. Furthermore, the following transformation leaves the set of solutions of~\eqref{CR} invariant:
$$
g(t,\xi) \mapsto \lambda g(\lambda^2 \mu^{2-2d} t,\mu x), \qquad \mbox{for $\mu,\lambda >0$}.
$$

\begin{remark}
In the 2D cubic case and the 1D quintic case, the~\eqref{CR} equation enjoys a much bigger group of symmetries and more conserved quantities due the mass critical nature of the nonlinearity. We refer to \cite{FGH} for details.
\end{remark}

\subsection{Virial formula}\label{viriel section}

The virial formula is a classical tool in the study of the nonlinear Schr\"odinger equation. It has an analog for the~\eqref{CR} equation.

\begin{proposition} If $g$ is a solution of the~\eqref{CR} equation,
$$
\frac{d}{dt} \| \nabla g \|_{L^2}^2 = 2 (2-d) (2\pi)^{d-1} \iint s |e^{is \Delta} \check g|^4 \,dx\,ds.
$$
\end{proposition}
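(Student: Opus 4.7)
The plan is to pass to physical space via Plancherel and then exploit the commutation relation between $x$ and $e^{is\Delta}$ together with the fact that $u(s,x) := e^{is\Delta}\check g(t,x)$ solves the free linear Schr\"odinger equation in the auxiliary variable $s$.

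Plancherel gives $\|\nabla g\|_{L^2}^2 = \int |x|^2 |\check g(t,x)|^2\,dx$. Differentiating in $t$, using the (CR) equation together with the physical-space form of $\mathcal T$ from Proposition~\ref{physicalspace}, and moving $e^{-is\Delta}$ to the other side by $L^2$-duality, one obtains
\begin{equation*}
\frac{d}{dt}\|\nabla g\|_{L^2}^2 = 2(2\pi)^{d-1}\operatorname{Im}\iint \overline{e^{is\Delta}(|x|^2 \check g)}(s,x)\,|u|^2 u(s,x)\, dx\, ds.
\end{equation*}
Iterating the commutation relation $e^{is\Delta}(x_j h) = (x_j + 2is\partial_j)\,e^{is\Delta} h$ then yields
\begin{equation*}
e^{is\Delta}(|x|^2 \check g) = \bigl(|x|^2 + 4is\,x\cdot\nabla + 2isd - 4s^2\Delta\bigr)u.
\end{equation*}

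Substituting, the $|x|^2|u|^4$ contribution is real and drops out, leaving three terms whose imaginary parts are
\begin{equation*}
-4s\,\operatorname{Re}\bigl((x\cdot\nabla\bar u)|u|^2 u\bigr),\qquad -2sd\,|u|^4,\qquad -4s^2\,\operatorname{Im}\bigl((\Delta\bar u)|u|^2 u\bigr).
\end{equation*}
For the first term I would apply $\operatorname{Re}(\bar u\,\partial_j u) = \tfrac12\partial_j |u|^2$ to reduce it to $-s\,x\cdot\nabla(|u|^4)$ and then integrate by parts in $x$, producing $+sd\,|u|^4$. The second term needs no manipulation. For the third I would use $\Delta\bar u = i\partial_s\bar u$, which follows from $i\partial_s u + \Delta u = 0$, to rewrite it as $-s^2\partial_s(|u|^4)$, and then integrate by parts in $s$, producing $+2s|u|^4$. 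Summing, the coefficient of $s|u|^4$ becomes $d - 2d + 2 = 2-d$; together with the overall factor $2(2\pi)^{d-1}$ this is exactly the stated identity.

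The main technical point to be checked is the justification of the $s$-integration by parts, i.e.\ the vanishing of the boundary term $s^2|u(s,\cdot)|^4$ as $|s|\to\infty$. This is standard for sufficiently regular (e.g.\ Schwartz) data by the dispersive decay of $e^{is\Delta}$, and matches the functional framework already needed for the Hamiltonian in Proposition~\ref{physicalspace} and the boundedness of $\mathcal T$ in Proposition~\ref{bddness}. Beyond that, the calculation is essentially bookkeeping of signs and factors of $i$ when extracting imaginary parts.
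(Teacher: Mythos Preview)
Your proof is correct and follows the same route as the paper's: pass to physical space via Plancherel and Proposition~\ref{physicalspace}, expand $e^{is\Delta}(|x|^2\check g)$ using the commutation identity $e^{is\Delta}x^2 = (x^2 + 2isd + 4isx\cdot\nabla - 4s^2\Delta)e^{is\Delta}$, drop the real $|x|^2|u|^4$ piece, and reduce the remaining three contributions by integrations by parts to the coefficients $d$, $-2d$, and $2$ in front of $s|u|^4$. Your handling of the $-4s^2\Delta$ term via $\Delta\bar u = i\partial_s\bar u$ and an integration by parts in $s$ is exactly what is meant by the paper's ``integrations by parts,'' and your remark on the vanishing boundary term is the appropriate caveat.
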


\begin{remark}
The above theorem shows a sharp distinction between the special dimension $d=2$ and higher dimensions. Indeed, this conservation was observed in \cite{FGH} for $d=2$, but what seems interesting is that the $\dot H^1$ norm is \emph{not} conserved in higher dimensions. 
\end{remark}

\begin{proof}
By Proposition~\eqref{physicalspace}, and using that $\mathcal{F}$ is an isometry on $L^2$, 
\begin{align*}
\frac{1}{2} \frac{d}{dt} \int |\nabla g|^2\,d\xi & = (2\pi)^{d-1} \mathfrak{Re} \iint i \mathcal{F} e^{-is\Delta} \left( e^{is\Delta} \check g \, \overline{e^{is\Delta}\check g}\,e^{is\Delta} \check g\right) \Delta \overline g \,d\xi \,ds \\
& = (2\pi)^{d-1} \mathfrak{Im} \iint e^{is\Delta} \check g\,\overline{e^{is\Delta}\check g}\,e^{is\Delta} \check g \,\overline{e^{is\Delta} x^2 \check g}\,dx\,ds.
\end{align*}
Using the identity
$$
e^{is\Delta} x^2 \phi = x^2 e^{is\Delta} \phi + 2isd e^{is\Delta} \phi + 4isx \cdot \nabla e^{is\Delta} \phi - 4s^2 \Delta e^{is\Delta} \phi,
$$
the above becomes
\begin{align*}
& \frac{1}{2} \frac{d}{dt} \int |\nabla g|^2\,dx \\
&\qquad  = (2\pi)^{d-1} \mathfrak{Im} \iint e^{is\Delta} \check g\,\overline{e^{is\Delta}\check g}\,e^{is\Delta} \check g \, \overline{\left[2isd e^{is\Delta} \check g + 4isx \cdot \nabla e^{is\Delta} \check g - 4s^2 \Delta e^{is\Delta} \check g \right]}\,dx\,ds \\
& \qquad = I + II + III
\end{align*}
Through integrations by parts, it is easy to see that
\begin{align*}
& I = - (2\pi)^{d-1} 2d \iint s |e^{is \Delta} \check g|^4 \,dx\,ds \\
& II = (2\pi)^{d-1} d \iint s |e^{is \Delta} \check g|^4 \,dx\,ds \\
& III = (2\pi)^{d-1} 2 \iint s |e^{is \Delta} \check g|^4 \,dx\,ds,
\end{align*}
from which the desired result follows.
\end{proof}

\section{Local and global well-posedness}
The first step is to establish boundedness properties of the trilinear operator $\mathcal{T}$.

\begin{proposition}\label{bddness}
The trilinear operator $\mathcal{T}$ is bounded from $X \times X \times X$ to $X$ for the following Banach spaces $X$:
\begin{itemize}
\item[(i)] $X = \dot L^{2,\frac{d-2}{2}}$
\item[(ii)] $X = L^{2,s}$ for $s \geq \frac{d-2}{2}$.
\item[(iii)] $X = L^{\infty,s}$, for $s>d-1$.
\item[(iv)] $X = L^{p,s}$ for $p \geq 2$ and $s > d-1-\frac{d}{p}$.
\item[(v)] $X=X^{\sigma, N}$ for any $\sigma> d-1$ and $N \geq 0$.
\end{itemize}
\end{proposition}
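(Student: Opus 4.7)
The plan is to reduce each boundedness claim to a four-linear space-time estimate via duality. Pairing $\mathcal{T}(g_1,g_2,g_3)$ in $L^2$ against a test function $g_4$ and invoking Proposition~\ref{physicalspace} yields
\[
\langle g_4,\mathcal{T}(g_1,g_2,g_3)\rangle = (2\pi)^{d-1}\iint_{\RR\times\RR^d} \overline{e^{is\Delta}\check g_4}\,e^{is\Delta}\check g_1\,\overline{e^{is\Delta}\check g_2}\,e^{is\Delta}\check g_3\,dx\,ds,
\]
so $\|\mathcal{T}(g_1,g_2,g_3)\|_X \lesssim \prod_{i=1}^3\|g_i\|_X$ is equivalent to controlling this integral by $\prod_{i=1}^3\|g_i\|_X\cdot\|g_4\|_{X^*}$. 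I would handle cases (i)--(ii) via Strichartz estimates on the physical side, case (iii) via the Fourier-side formula, and cases (iv)--(v) by multilinear complex interpolation together with differentiation under the integral.

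Case (i), $X=\dot L^{2,(d-2)/2}$, is scale-critical. Writing $U = e^{is\Delta}\check g$ and using Plancherel and the expression for $\mathcal{T}$ from Proposition~\ref{physicalspace}, the estimate becomes
\[
\|\mathcal{T}(g,g,g)\|_{\dot L^{2,(d-2)/2}} \;\lesssim\; \left\|\int_\RR e^{-is\Delta}\,|\nabla|^{(d-2)/2}\bigl(|U|^2 U\bigr)(s)\,ds\right\|_{L^2_x}.
\]
I would apply the Keel--Tao endpoint dual Strichartz estimate at the admissible pair $(2,2d/(d-2))$ (valid for $d\ge 3$), then use the Kato--Ponce fractional Leibniz rule to place the derivative $|\nabla|^{(d-2)/2}$ on a single factor, and close by bounding that factor in $L^\infty_s\dot H^{(d-2)/2}$ via conservation under the free flow and the other two in $L^4_s L^{2d}_x$ via the spatial Sobolev embedding $\dot W^{(d-2)/2,2d/(d-1)}\hookrightarrow L^{2d}$ followed by Strichartz at the pair $(4,2d/(d-1))$; the exponents balance precisely with the critical $L^2_s L^{2d/(d+2)}_x$ target. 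Case (ii) with $s\ge (d-2)/2$ follows by the same scheme, distributing the extra $s-(d-2)/2$ derivatives across the three factors via Leibniz.

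For case (iii), $X = L^{\infty,s}$ with $s > d-1$, I would use the Fourier side. The change of variables $\xi_1 = \xi+a$, $\xi_3 = \xi+b$, $\xi_2 = \xi+a+b$ reduces the constraint $\delta(\Omega)$ to $\tfrac12\delta(a\cdot b)$, giving
\[
\mathcal{T}(g_1,g_2,g_3)(\xi) \;=\; \tfrac{1}{2}\iint_{\RR^d\times\RR^d} g_1(\xi+a)\,\overline{g_2(\xi+a+b)}\,g_3(\xi+b)\,\delta(a\cdot b)\,da\,db.
\]
Inserting $|g_i(\eta)|\le \langle\eta\rangle^{-s}\|g_i\|_{L^{\infty,s}}$ reduces matters to the kernel estimate
\[
\iint \langle\xi+a\rangle^{-s}\langle\xi+a+b\rangle^{-s}\langle\xi+b\rangle^{-s}\,\delta(a\cdot b)\,da\,db \;\lesssim\; \langle\xi\rangle^{-s},
\]
which I would verify by parametrizing the $(2d-1)$-dimensional support of $\delta(a\cdot b)$ via $a = r\omega$ with $r>0$, $\omega\in S^{d-1}$, $b\in\omega^\perp$, and integrating region by region; the orthogonality $a\perp b$ (which forces $|a+b|^2 = |a|^2+|b|^2$) together with the strict inequality $s > d-1$ delivers integrability. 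Case (iv) with $p\in(2,\infty)$ and $s > d-1-d/p$ then follows by multilinear complex interpolation between case (ii) at $s_0 = (d-2)/2$ and case (iii) at some $s_1 > d-1$, the interpolation exponent $\theta = 1-2/p$ reaching $L^{p,s}$ for any $s > d-1-d/p$. Case (v) is obtained by differentiating under the integral sign and applying case (iii) to each $\nabla_\xi^\alpha g_i$.

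The main obstacle is case (i): at the scale-critical regularity there is no slack, and the argument must be tight, relying on the Keel--Tao endpoint Strichartz estimate and a careful redistribution of derivatives via Kato--Ponce. The remaining cases enjoy strict subcriticality $s > d-1-d/p$, which gives ample room for the Fourier-side treatment and multilinear interpolation.
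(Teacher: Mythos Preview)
Your overall architecture matches the paper's: Strichartz and fractional Leibniz on the physical side for (i)--(ii), a Fourier-side kernel bound for (iii), interpolation for (iv), and the product rule for (v). Two implementation differences are worth noting.

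For (i)--(ii), the paper does not go through the Keel--Tao endpoint. It simply applies Minkowski to bound $\|\mathcal T(f_1,f_2,f_3)\|_{L^{2,s}}$ by $\bigl\|e^{is\Delta}\check f_1\,\overline{e^{is\Delta}\check f_2}\,e^{is\Delta}\check f_3\bigr\|_{L^1_s H^s_x}$, then uses fractional Leibniz, the non-endpoint symmetric Strichartz pair $\bigl(\tfrac{2(d+2)}{d},\tfrac{2(d+2)}{d}\bigr)$, and Sobolev embedding to close. Your dual-Strichartz route is correct for $d\ge 3$, but it needlessly invokes the endpoint; the $L^1_t$ approach is more elementary and treats all $d\ge 2$ uniformly. (Also, your opening duality framing is never actually used: you estimate $\|\mathcal T\|_X$ directly, not the pairing.)

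For (iii), your $(a,b)$ parametrization with $a\perp b$ is equivalent to the paper's $(y,z)$ with $|y|=|z|$, but your sketch omits the step that absorbs the weight $\langle\xi\rangle^s$. The resonance relation gives $|\xi_1|^2+|\xi_3|^2=|\xi|^2+|\xi_2|^2\ge|\xi|^2$, so $\langle\xi\rangle\lesssim\max(\langle\xi+a\rangle,\langle\xi+b\rangle)$; this lets you cancel the weight against one of the three decaying factors, after which the remaining integral is handled by the spherical bound $\int_{S^{d-1}}\langle A+r\omega\rangle^{-s}\,d\omega\lesssim\langle|A|-r\rangle^{d-1-s}\langle r\rangle^{1-d}$ applied twice. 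Your remark that $|a+b|^2=|a|^2+|b|^2$ concerns $\xi_2$, not the factor you need to cancel; without the observation above, ``integrating region by region'' does not by itself deliver the \emph{uniform in $\xi$} bound you claim.
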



The borderline spaces for well-posedness in the above proposition are $\dot L^{p,s}$, with $s = d-1-\frac{d}{p}$. They share the same scaling, and are also scale-invariant for the cubic NLS $i\partial_t u - \Delta u = |u|^2 u$ set in $\mathbb{R}^d$ (when viewed as spaces for $\widehat{u}$).

\begin{proof} The assertions $(i)$ and $(ii)$ follow from applying successively Proposition~\ref{physicalspace}, the Minkowski inequality, the fractional Leibniz rule, Strichartz' inequality, and Sobolev embedding (cf.\ Appendix A, \cite{TT}): indeed,
\begin{align*}
\|\mathcal T (f_1,f_2, f_3)\|_{L^{2,s}}\leq &(2\pi)^{d-1}\left \|e^{it\Delta} \check f_1(x)\overline{e^{it\Delta}\check f_2(x)}e^{it\Delta} \check f_3(x)ds\right\|_{L_t^1 H^s}\\
\lesssim& \sum_{j=1}^3 \|e^{it\Delta} \check f_j\|_{L_t^{\frac{2(d+2)}{d}} W^{s,\frac{2(d+2)}{d}}} \prod_{k\neq j}\|e^{it\Delta} \check f_j\|_{L_t^{\frac{4(d+2)}{d+4}} L_x^{2(d+2)}} \lesssim \prod_{j=1}^3\|\check f_j\|_{H^{s}}.
\end{align*}
provided $s\geq \frac{d-2}{2}$.

The assertion $(iv)$ is obtained by interpolating between $(ii)$ and $(iii)$; therefore, it only remains to prove $(iii)$. It is equivalent to proving that, for $s>d-1$,
$$
\sup_\xi \iiint_{(\RR^d)^3} \frac{\langle \xi \rangle^s}{\langle \xi_1 \rangle^s \langle \xi_2 \rangle^s \langle \xi_3 \rangle^s} \delta_{\RR^d}(\xi_1-\xi_2+\xi_3-\xi) \delta_{\RR}(\Omega) \, d\xi_1 d\xi_2 d\xi_3 < \infty.
$$
Observe that the first $\delta$ in the above integrand imposes $\xi_2 = \xi_1 + \xi_3 - \xi$. Switching to the integration variables $y = \frac{\xi_1 + \xi_3 - 2\xi}{2}$ and $z = \frac{\xi_1 - \xi_3}{2}$, the above becomes
$$
\sup_\xi \iint_{(\RR^d)^2} \frac{\langle \xi \rangle^s}{\langle \xi + y +z \rangle^s \langle \xi + y -z \rangle^s \langle \xi + 2y \rangle^s} \delta(y^2 - z^2) \,dy\,dz < \infty.
$$
Since the $\delta$ function in the above integrand forces $|y| = |z|$, either $\langle \xi \rangle \lesssim \langle \xi + y +z \rangle$ or $\langle \xi \rangle \lesssim \langle \xi + y -z \rangle$. Assuming without loss of generality that the former occurs, it suffices to show that
$$
\sup_\xi \iint_{(\RR^d)^2} \frac{1}{ \langle \xi + y -z \rangle^s \langle \xi + 2y \rangle^s} \delta(y^2 - z^2) \,dy\,dz < \infty.
$$
We will now rely on the simple estimate: if $A \in \mathbb{R}^{d}$, $r>0$, and $s > d-1$, 
$$
\int_{\mathbb{S}^{d-1}} \frac{1}{\langle A + r \omega \rangle^s} \,d\omega \lesssim \langle |A| - r \rangle^{d-1-s} \langle r \rangle^{1-d}
$$
(in order to check this estimate, observe first that $\int_{\mathbb{S}^{d-1}} \frac{1}{\langle A + r \omega \rangle^s} \,d\omega \sim \int_0^\pi \frac{(\sin \phi)^{d-2}}{(|A|^2 + r^2 - 2|A|r \cos \phi + 1)^{s/2}} \, d\phi$, and then that, close to $\phi = 0$, this becomes $\sim \int_0^1 \frac{\phi^{d-2}}{(\langle |A| - r \rangle^2 + |A| r \phi^2 )^{s/2}} \, d\phi $).
Changing the integration variables to $y = r \omega$ and $z = r \phi$, and using the inequality above twice gives the desired result:
\begin{align*}
& \sup_\xi \iint_{(\RR^d)^2} \frac{1}{ \langle \xi + y -z \rangle^s \langle \xi + 2y \rangle^s} \delta(y^2 - z^2) \,dy\,dz \\
& \qquad \qquad \qquad =  \sup_\xi \int_0^\infty \int_{(\mathbb{S}^{d-1})^2} \frac{1}{\langle \xi + r \omega - r \phi \rangle^s \langle \xi + 2 r \omega  \rangle^s} r^{2d-3} \, d\omega \, d\phi\,dr \\
& \qquad \qquad \qquad \lesssim  \sup_\xi \int_0^\infty \int_{\mathbb{S}^{d-1}}  \frac{1}{ \langle \xi + 2 r \omega  \rangle^s} r^{d-2} \,d\omega \, dr \\
& \qquad \qquad \qquad \lesssim  \sup_\xi \int_0^\infty \frac{1}{\langle r \rangle \langle |\xi| - 2r \rangle^{s-d+1}}\,dr < \infty.
\end{align*}

Finally, part $(v)$ follows from part $(iii)$ and Leibniz rule. 
\end{proof}

Local and global regularity properties of (CR) follow easily:

\begin{corollary}
\begin{enumerate}
\item[(i)] Local well-posedness: For $X$  any of the spaces given in Proposition \ref{bddness}, the 
Cauchy problem~\eqref{CR} is locally well-posed in $X$.
More precisely, for any $g_0$ in $X$, there exists a time $T>0$, and  a solution in $\mathcal{C}^\infty ([0,T],X)$,
which is unique in $L^\infty ([0,T],X)$, and depends continuously on $g_0$ in this topology. 
\item[(ii)] Global well-posedness for finite mass: if $d=2$ and $g_0 \in L^{2}$, the local solution can be prolonged into a global one.
More precisely: there exists a unique solution $ g \in \mathcal{C}^\infty ([0,\infty),L^{2})$ which, for any $T$, is unique
in $L^\infty ([0,T],L^{2})$, and depends continuously on $g_0$ in this topology. 
\item[(iii)] Global well-posedness for finite kinetic energy: if $d = 2,3,4$ and $g_0 \in L^{2,1}$, the local solution can be prolonged into a global one.
\item[(iv)] Propagation of regularity: assume $g_0 \in \dot L^{2, \frac{d-2}{2}}$, and let $g$ be the solution given in (i).
If in addition $g_0 \in L^{2, \sigma}$ for $\sigma  \geq \frac{d-2}{2}$ then $g \in \mathcal{C}^\infty ([0,T), L^{2,\sigma})$. 
\end{enumerate}
\end{corollary}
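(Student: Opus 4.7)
The plan is to derive all four claims from the trilinear bound in Proposition~\ref{bddness}, using a contraction mapping argument for (i), the conservation laws of Proposition~\ref{ConsQ} for (ii)--(iii), and a persistence-of-regularity argument for (iv). For part (i), I would apply Banach's fixed point theorem to the Duhamel map
$$
\Phi(g)(t) = g_0 - i \int_0^t \mathcal{T}(g,g,g)(s)\,ds
$$
on the closed ball of radius $2\|g_0\|_X$ in $L^\infty([0,T],X)$. Proposition~\ref{bddness}, together with the telescoping identity
$$
\mathcal{T}(g,g,g) - \mathcal{T}(h,h,h) = \mathcal{T}(g-h,g,g) + \mathcal{T}(h,g-h,g) + \mathcal{T}(h,h,g-h),
$$
yields contraction as soon as $T \lesssim \|g_0\|_X^{-2}$, producing the unique local solution and its Lipschitz continuous dependence on the data. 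The $C^\infty$-in-time regularity is automatic: since $\mathcal{T}$ is trilinear, one can differentiate the equation $\partial_t g = -i\mathcal{T}(g,g,g)$ inductively to see $\partial_t^k g \in L^\infty([0,T],X)$ for every $k$.

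For (ii) and (iii) the mechanism is identical: the local time in (i) depends only on the $X$-norm of $g_0$, so global existence reduces to any a priori bound on that norm. In $d=2$, the space $L^2 = L^{2,0}$ with $0 = (d-2)/2$ lies in the scope of Proposition~\ref{bddness}(ii), and conservation of mass by Proposition~\ref{ConsQ}(i) iterates the local result to a global one. For (iii), note that
$$
\|g\|_{L^{2,1}}^2 \simeq \|g\|_{L^2}^2 + \int |\xi|^2 |g(\xi)|^2\,d\xi
$$
is exactly mass plus kinetic energy, both conserved by Proposition~\ref{ConsQ}, while the constraint $1 \geq (d-2)/2$ restricts to $d \leq 4$, the range in which Proposition~\ref{bddness}(ii) applies to $L^{2,1}$.

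For (iv), the crux is a polarized trilinear estimate of the form
$$
\|\mathcal{T}(f_1,f_2,f_3)\|_{L^{2,\sigma}} \lesssim \sum_{j=1}^3 \|f_j\|_{L^{2,\sigma}} \prod_{k \neq j} \|f_k\|_{\dot L^{2,(d-2)/2}},
$$
which I would establish by repeating the Strichartz-plus-fractional-Leibniz computation from the proof of Proposition~\ref{bddness}(ii), routing the $\sigma$-derivatives via Leibniz onto a single factor and absorbing the other two in the scale-invariant Strichartz norm controlled by the critical space $\dot L^{2,(d-2)/2}$. Granting this estimate, and recalling from (i) that $\|g(t)\|_{\dot L^{2,(d-2)/2}}$ is bounded on every compact subinterval of the critical lifespan $[0,T)$, the Duhamel formula yields
$$
\|g(t)\|_{L^{2,\sigma}} \leq \|g_0\|_{L^{2,\sigma}} + C \int_0^t \|g(s)\|_{L^{2,\sigma}} \|g(s)\|_{\dot L^{2,(d-2)/2}}^2\,ds,
$$
and Gronwall's inequality closes the bound and rules out blow-up of the $L^{2,\sigma}$-norm on $[0,T)$. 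The main obstacle is precisely this polarized estimate: parts (i)--(iii) are standard consequences of Propositions~\ref{bddness} and~\ref{ConsQ}, whereas the polarized bound is not stated in Proposition~\ref{bddness} and requires a careful allocation of the top-order weight onto one factor so that the other two can be absorbed at the critical regularity.
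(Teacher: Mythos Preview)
Your proposal is correct and is precisely the standard argument the paper gestures at with the words ``immediate,'' ``classical,'' and ``conservation laws.'' The paper's own proof consists of three sentences and gives no further detail, so your write-up is a faithful expansion of what the authors intend.

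One remark on the point you flag as the main obstacle in (iv): the polarized trilinear estimate
\[
\|\mathcal{T}(f_1,f_2,f_3)\|_{L^{2,\sigma}} \lesssim \sum_{j=1}^3 \|f_j\|_{L^{2,\sigma}} \prod_{k \neq j} \|f_k\|_{\dot L^{2,(d-2)/2}}
\]
is in fact already contained in the displayed chain of inequalities in the proof of Proposition~\ref{bddness}(ii). The intermediate line there places the $W^{s,\frac{2(d+2)}{d}}$ norm on one factor (controlled by Strichartz at regularity $s=\sigma$) and the $L^{2(d+2)}_x$ norm on the other two (controlled by Strichartz plus Sobolev embedding at the critical regularity $\tfrac{d-2}{2}$); the authors simply collapse all three to $H^s$ in the final step. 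So no new computation is required, and your Gronwall argument closes immediately.
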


\begin{proof}
The proof of $(i)$ is immediate since~\eqref{CR} is an ODE in $X$. Combining this local well-posedness result with conservation laws classically gives global well-posedness, leading to $(ii)$ and $(iii)$. Finally, $(iv)$ is classical.
\end{proof}

\section{Stationary waves}

\subsection{Basic properties}

We will discuss here the existence of solutions of the type
$$
g(t,\xi) = e^{-i (\mu + \lambda |\xi|^2 + \nu \cdot \xi) t } \psi,
$$
where $\lambda, \mu \in \mathbb{R}$, and $\nu \in \mathbb{R}^d$. For $g$ to solve~\eqref{CR}, it suffices that $\psi$ solves
$$
(\lambda |\xi|^2 + \mu + \nu \cdot \xi) \psi = \mathcal{T}(\psi,\psi,\psi).
$$
Notice that $g$ defined above oscillates in Fourier space, but it actually travels in physical space, as can be seen by taking its inverse Fourier transform:
$$
\check g(t,x) = e^{-i (\mu - \lambda \Delta) t} \check \psi(x - \nu t).
$$
The conservation of position (part $(ii)$ of Proposition \ref{ConsQ}) gives a restriction on the relation between $\nu$ and $\lambda$. Indeed, using the identity $[x, e^{it\Delta}]=-2it\nabla e^{it\Delta}$, one obtains that
$$
\int x|\check g|^2\, dx=\int x |\check \psi|^2 dx+t\left(\nu M(\check \psi) - 2\lambda P(\widehat \psi) \right)
$$
where $M(\check \psi)= \int |\check \psi|^2 dx$ and $P(\check \psi)=i \int \nabla \check \psi \overline{\check \psi} \, dx \in \mathbb{R}$. Since $\int x |\check g|^2 dx$ is conserved, one must have that 
$$
\nu= \frac{2 P(\check \psi)}{M(\check \psi)} \lambda. 
$$

By invariance of $\mathcal{T}$ under translations, we can define 
$$
\phi(\xi)=\psi(\xi -\frac{\nu}{2\lambda})
$$
which solves
\begin{equation}
\label{eqphi}
(\lambda |\xi|^2 + \mu) \phi = \mathcal{T}(\phi,\phi,\phi);
\end{equation}
we will from now on focus on this equation. 

\medskip

\begin{lemma}[Energy and Pohozaev identities]
\label{lemmep}
Assume that $\phi$ solves~\eqref{eqphi}, and that $\mathcal{H}(\phi) < \infty$. If furthermore $\phi \in L^{2,1}$, it satisfies the energy identity
\begin{equation}
\label{energyidentity}
\lambda \| \xi \phi \|_{L^2}^2 + \mu \| \phi \|_{L^2}^2 = \mathcal{H}(\phi).
\end{equation}
If furthermore $\phi \in L^2$ and $\xi \nabla \phi \in L^2$, it satisfies the Pohozaev identity
\begin{equation}
\label{pohozaevidentity}
\lambda \left( \frac{d}{2} - 1 \right) \| \xi \phi \|_{L^2}^2 + \mu \frac{d}{2} \| \phi \|_{L^2}^2 = \left( \frac{1}{2} + \frac{d}{4} \right) \mathcal{H}(\phi).
\end{equation}
\end{lemma}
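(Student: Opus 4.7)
The plan is to derive both identities by testing equation \eqref{eqphi} against appropriate multipliers and then reducing the cubic term to the Hamiltonian via the physical-space formula of Proposition~\ref{physicalspace}.

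For the energy identity \eqref{energyidentity}, I would pair \eqref{eqphi} with $\bar\phi(\xi)$ and integrate over $\RR^d$. The left-hand side becomes $\lambda\|\xi\phi\|_{L^2}^2 + \mu\|\phi\|_{L^2}^2$, finite under the hypothesis $\phi\in L^{2,1}$. For the right-hand side, Parseval together with the trilinear formula of Proposition~\ref{physicalspace} rewrites
\[
\int_{\RR^d}\bar\phi(\xi)\,\mathcal{T}(\phi,\phi,\phi)(\xi)\,d\xi \;=\; (2\pi)^{d-1}\iint \left|e^{is\Delta}\check\phi\right|^4 dx\,ds,
\]
which is (a constant multiple of) $\mathcal{H}(\phi)$, yielding \eqref{energyidentity}.

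For the Pohozaev identity \eqref{pohozaevidentity}, I would use a scaling argument. Equation \eqref{eqphi} is the Euler-Lagrange equation $\partial S/\partial\bar\phi = 0$ of the action
\[
S(\phi) := \mathcal{H}(\phi) - \lambda\int|\xi|^2|\phi|^2\,d\xi - \mu\int|\phi|^2\,d\xi,
\]
so $\phi$ is critical and $\partial_\sigma S(\phi_\sigma)|_{\sigma=1}=0$ along any smooth one-parameter family through $\phi$. I would take $\phi_\sigma(\xi) := \sigma^d\phi(\sigma\xi)$; elementary changes of variable give $\|\phi_\sigma\|_{L^2}^2 = \sigma^d\|\phi\|_{L^2}^2$ and $\|\xi\phi_\sigma\|_{L^2}^2 = \sigma^{d-2}\|\xi\phi\|_{L^2}^2$. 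Since $\check\phi_\sigma(x) = \check\phi(x/\sigma)$, one has $e^{is\Delta}\check\phi_\sigma(x) = e^{i(s/\sigma^2)\Delta}\check\phi(x/\sigma)$, and the substitutions $y = x/\sigma$, $t = s/\sigma^2$ in the $L^4_{x,s}$ norm yield $\mathcal{H}(\phi_\sigma) = \sigma^{d+2}\mathcal{H}(\phi)$. Differentiating $S(\phi_\sigma)$ at $\sigma=1$ then produces a linear relation between $\mathcal{H}(\phi)$, $\lambda\|\xi\phi\|_{L^2}^2$, and $\mu\|\phi\|_{L^2}^2$ which, after collecting constants, is exactly \eqref{pohozaevidentity}.

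The delicate point is justifying the $\sigma$-differentiation: the linearization $\partial_\sigma\phi_\sigma|_{\sigma=1} = d\phi + \xi\cdot\nabla\phi$ must belong to a space in which each of the three functionals in $S$ is Gateaux differentiable. The hypotheses $\phi\in L^2$, $\xi\nabla\phi\in L^2$, and $\mathcal{H}(\phi)<\infty$ are tailored exactly to this purpose. A hands-on alternative route is to multiply \eqref{eqphi} directly by $\xi\cdot\nabla\bar\phi$, integrate, take twice the real part, and integrate by parts on the left (legal under the same regularity, with boundary terms vanishing at infinity); the cubic term on the right is then recognized, via Proposition~\ref{physicalspace}, as the derivative of $\mathcal{H}$ along the family $\phi(\sigma\,\cdot\,)$, and the same algebra produces the identity.
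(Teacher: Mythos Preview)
Your treatment of the energy identity is exactly the paper's: pair~\eqref{eqphi} with $\bar\phi$ and read off the cubic term via Proposition~\ref{physicalspace}.

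For the Pohozaev identity you take a different route. The paper passes to physical space, pairs $(-\lambda\Delta+\mu)\check\phi=\check{\mathcal T}$ with $x\cdot\nabla\check\phi$, and then uses the commutator identity $e^{is\Delta}(x\cdot\nabla)=(x\cdot\nabla+2is\Delta)e^{is\Delta}$ together with integrations by parts in $x$ and in $s$ to reduce every piece to a multiple of $\iint|e^{is\Delta}\check\phi|^4\,dx\,ds$. Your scaling argument reaches the same linear relation by computing the homogeneities of $\mathcal H(\phi_\sigma)$, $\|\xi\phi_\sigma\|_{L^2}^2$, $\|\phi_\sigma\|_{L^2}^2$ under $\phi_\sigma(\xi)=\sigma^d\phi(\sigma\xi)$ and differentiating $S(\phi_\sigma)$ at $\sigma=1$. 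The two are of course the same identity in disguise (the dilation generator $d+\xi\cdot\nabla_\xi$ is, up to sign, the Fourier conjugate of $x\cdot\nabla_x$), but your packaging is cleaner: once the three exponents $d$, $d-2$, $d+2$ are known, the identity drops out with no commutator bookkeeping. The paper's hands-on computation, by contrast, makes the use of the hypotheses $\phi\in L^2$, $\xi\nabla\phi\in L^2$ visible term by term. Your ``alternative route'' in the final paragraph, multiplying~\eqref{eqphi} by $\xi\cdot\nabla\bar\phi$, is essentially the Fourier-side version of what the paper actually carries out.
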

\begin{proof} The energy identity follows immediately from taking the $L^2$ scalar product of~\eqref{eqphi} with $\phi$. As for the Pohozaev identity, take first the inverse Fourier transform to obtain
$$
(-\lambda \Delta + \mu) \check \phi = (2 \pi)^{d-1} \int e^{-is\Delta} \left( e^{is\Delta} \check \phi \, \overline{e^{is\Delta} \check \phi} \, e^{is\Delta} \check \phi \right) \,ds.
$$
Pairing this identity with $x \cdot \nabla \phi$ and subsequently taking the real part gives three terms, which after integrations by parts read
\begin{align*}
& \mathfrak{Re} -\lambda \int \Delta \check \phi \, \overline{x \cdot \nabla \check \phi} \, dx = \lambda \left( 1 - \frac{d}{2} \right) \int |\nabla \check \phi|^2\, dx \\
& \mathfrak{Re} \, \mu \int \check \phi \, \overline{x \cdot \nabla \check \phi} \, dx = -\mu \frac{d}{2} \int |\check \phi|^2 \,dx \\
& \mathfrak{Re} \,(2 \pi)^{d-1} \iint e^{-is\Delta} \left( e^{is\Delta} \check \phi \, \overline{e^{is\Delta} \check \phi} \, e^{is\Delta} \check \phi \right) \,ds \, \overline{x \cdot \nabla \check \phi} \,dx = - (2 \pi)^{d-1} \left( \frac{1}{2} + \frac{d}{4} \right) \iint |e^{it\Delta} \check \phi|^4 \, dx\,ds.
\end{align*}
This gives the desired identity!
\end{proof}

Linear combinations of~\eqref{energyidentity} and~\eqref{pohozaevidentity} reveal that, under the hypotheses of Lemma~\ref{lemmep},
\begin{align*}
& \lambda \| \xi \phi \|_{L^2}^2 = \frac{d-2}{4} \mathcal{H} (\phi) \\
& \mu \| \phi \|_{L^2}^2 = \frac{6-d}{4} \mathcal{H}(\phi).
\end{align*}
In particular, still under the hypotheses of Lemma~\ref{lemmep}, necessary conditions for~\eqref{eqphi} to admit a solution are
\begin{itemize}
\item If $d=2$, $\lambda = 0$ and $\mu > 0$.
\item If $3 \leq d \leq 5$, $\lambda > 0$ and $\mu >0$.
\item If $d=6$, $\lambda > 0$ and $\mu = 0$.
\item If $d \geq 7$, $\lambda > 0$ and $\mu < 0$.
\end{itemize}
If $2 \leq d \leq 6$, the existence of a solution is ensured by the following theorem, but in less smooth classes of solutions.

\begin{remark}
We construct solutions to those equations below, but in slightly less smooth classes. It would be interesting to prove regularity of such solutions to bridge the gap.
\end{remark}

\subsection{The variational problem}

\begin{theorem}
\label{theoremvariational}
The following variational problems admit nonzero maximizers:
\begin{itemize}
\item $\displaystyle \sup_{\| g \|_{L^2} = 1} \mathcal{H} (g)$ if $d = 2$.
\item $\displaystyle \sup_{\| g \|_{L^2}^2 + \| \xi g\|_{L^2}^2 = 1} \mathcal{H} (g)$ if $3 \leq d \leq 5$.
\item $\displaystyle \sup_{\| \xi g \|_{L^2} = 1} \mathcal{H} (g)$ if $d=6$.
\end{itemize}
Furthermore, maximizing sequences are compact modulo the symmetries of the equation. In dimension 2, maximizers are given by Gaussians.
\end{theorem}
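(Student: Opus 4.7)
The plan is to recast the three variational problems as Strichartz maximizer problems in physical space. Setting $u_0 = \check g$, Proposition~\ref{physicalspace} identifies $\mathcal{H}(g) = \frac{(2\pi)^{d-1}}{2}\|e^{is\Delta}u_0\|_{L^4_{s,x}}^4$, while Plancherel gives $\|g\|_{L^2}=\|u_0\|_{L^2}$ and $\|\xi g\|_{L^2}=\|\nabla u_0\|_{L^2}$. The three problems thus become maximization of the linear Strichartz norm $\|e^{is\Delta}u_0\|_{L^4_{s,x}}$ over the unit sphere of $L^2$ (for $d=2$), of $H^1$ (for $3\le d\le 5$), or of $\dot H^1$ (for $d=6$). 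The scaling-consistent Strichartz bound $\|e^{is\Delta}u_0\|_{L^4_{s,x}}\lesssim\|u_0\|_{\dot H^{(d-2)/4}}$ shows that $\mathcal{H}$ is continuous and bounded on each constraint set, and testing against a Gaussian shows that the supremum is strictly positive in every case.

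In the subcritical regime $3\le d\le 5$, the $H^1$ constraint is strictly stronger than the Strichartz-critical space $\dot H^{(d-2)/4}$, and in particular the scaling symmetry (vi) of Proposition~\ref{symm} is \emph{not} a symmetry of the constraint. I would take a maximizing sequence $g_n$, apply translations and modulations (symmetries (ii)--(iii)) to recenter it, and extract a weak $H^1$ limit $g_\infty$ for $\check g_n$. A classical Lions concentration-compactness argument rules out vanishing (because $L^4_{s,x}$ is controlled by a strictly subcritical Sobolev norm, which is locally compact by Rellich--Kondrachov) and dichotomy (by strict superadditivity of the maximization problem at the level of the constraint functional), leaving only the compactness alternative; strong $H^1$ convergence then produces a maximizer.

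The critical cases $d=2$ and $d=6$ are more delicate because the constraint and the Strichartz-critical Sobolev space coincide, so the scaling direction must also be quotiented out. Here I would invoke the profile decomposition of Keraani (and of B\'egout--Vargas in the $\dot H^1$-critical setting): a bounded sequence $u_{0,n}$ in $L^2$, respectively $\dot H^1$, admits a decomposition $u_{0,n}=\sum_{j=1}^{J}\mathcal{G}_n^j\varphi^j+R_n^{(J)}$, where the $\mathcal{G}_n^j$ are pairwise orthogonal sequences of symmetries (translations, Galilean boosts, and, crucially, scalings) and the remainder has arbitrarily small $L^4_{s,x}$ linear Strichartz norm as $J,n\to\infty$. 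Asymptotic orthogonality of the linear flows decouples the $L^4_{s,x}$ norms of the profiles at leading order, and strict superadditivity of $g\mapsto\mathcal{H}(g)^{1/4}$ on the constraint forces all but one profile to vanish; undoing the composite symmetry, the surviving profile is the desired maximizer.

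The main obstacle is organizing the profile decomposition in the critical cases so that scaling-driven concentration and spreading are precisely the scenarios absorbed by the symmetry group. Finally, the identification of the two-dimensional maximizers with Gaussians is not a consequence of the compactness argument above, but of the sharp $L^4_{s,x}$ Strichartz inequality of Foschi (and independently Hundertmark--Zharnitsky), whose extremizers are explicitly computed to be Gaussians; this can be quoted to conclude.
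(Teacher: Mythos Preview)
Your overall strategy matches the paper's: recast via Proposition~\ref{physicalspace} as a Strichartz-maximizer problem, cite the literature for the critical endpoints $d=2$ and $d=6$ (Foschi, Hundertmark--Zharnitsky, Shao), and run a concentration-compactness argument for $3\le d\le 5$. The identification of Gaussians in $d=2$ by quoting the sharp Strichartz results is exactly what the paper does.

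There is, however, a genuine gap in your treatment of $3\le d\le 5$. You propose to recenter using symmetries (ii)--(iii) and then run the classical Lions trichotomy. First, symmetry (ii), translation in $\xi$, does \emph{not} preserve the constraint $\|g\|_{L^2}^2+\|\xi g\|_{L^2}^2=1$ (equivalently, modulation $u_0\mapsto e^{ix\cdot\xi_0}u_0$ does not preserve $\|\nabla u_0\|_{L^2}$), so it cannot be used. More importantly, you have omitted the quadratic-modulation symmetry (iv), i.e.\ the free Schr\"odinger time shift $u_0\mapsto e^{i\tau\Delta}u_0$, which \emph{is} a noncompact direction for this problem: the sequence $u_{0,n}=e^{in\Delta}\varphi$ is bounded in $H^1$, has constant Strichartz norm $\|e^{it\Delta}u_{0,n}\|_{L^4_{t,x}}=\|e^{it\Delta}\varphi\|_{L^4_{t,x}}$, yet vanishes in the Lions sense (by the dispersive estimate, $\sup_y\int_{B(y,R)}|u_{0,n}|^2\to 0$). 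Your stated reason for excluding vanishing (``$L^4_{s,x}$ is controlled by a strictly subcritical Sobolev norm, locally compact by Rellich--Kondrachov'') therefore fails on exactly this sequence.

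The paper avoids this by using, even in the subcritical range, a profile decomposition \`a la Duyckaerts--Holmer--Roudenko that incorporates both spatial translations \emph{and} time shifts,
\[
f_n(x)=\sum_{j=1}^J e^{it_n^j\Delta}\psi^j(x+x_n^j)+r_n^J,
\]
with asymptotic orthogonality in $L^2$, $\dot H^1$, and $L^4_{t,x}$. The scaling relation $I(A)=A^2 I(1)$ then forces a single nonzero profile, exactly as in your superadditivity step. Your argument becomes correct once you replace the Lions trichotomy by this two-parameter profile decomposition (or, equivalently, include symmetry (iv) among the recentering symmetries and show that after recentering in both space and Schr\"odinger time the remainder is small in $L^4_{t,x}$).
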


This variational problem makes sense due to the Strichartz estimates
\begin{align*}
&\mathcal{H}(g) = \| e^{it\Delta} \check{g} \|_{L^4_{t,x}} \lesssim \| g \|_{L^2} \qquad \mbox{if $d=2$} \\
&\mathcal{H}(g) = \| e^{it\Delta} \check{g} \|_{L^4_{t,x}} \lesssim \| g \|_{L^{2,1}} \qquad \mbox{if $3 \leq d \leq 5$} \\
&\mathcal{H}(g) = \| e^{it\Delta} \check{g} \|_{L^4_{t,x}} \lesssim \| g \|_{\dot L^{2,1}} \qquad \mbox{if $d=6$}.
\end{align*}
Therefore, these variational problems belong to the class which arises from Fourier restriction functionals. The cases $d=2$ and $d=6$, as extremizers of the Strichartz~\cite{DF,HZ,FGH} and Sobolev-Strichartz inequalities~\cite{SS} are already known; as we will see, the case $3 \leq d \leq 5$, which is somewhat simpler since it is not critical, will follow from an application of the concentration-compactness technology (as in~\cite{K, DHR}). 


Of great interest is the question of the uniqueness (modulo symmetries) of extremizers, but we can only establish this in a handful of cases, such as the case $d=2$ of the above theorem.

The Euler-Lagrange equations satisfied by the maximizers of these variational problems read
\begin{itemize}
\item $\lambda g = \mathcal{T}(g,g,g)$ (for a Lagrange multiplier $\lambda$) if $d=2$.
\item $\lambda [ g + |\xi|^2 g] = \mathcal{T}(g,g,g)$ (for a Lagrange multiplier $\lambda$) if $3 \leq d \leq 5$.
\item $\lambda |\xi|^2 g = \mathcal{T}(g,g,g)$ (for a Lagrange multiplier $\lambda$) if $d=6$.
\end{itemize}
These three equations should be understood as equalities in $L^2$, $H^{-1}$, and $\dot H^{-1}$ respectively. Since the maximizers are nonzero, testing the above equations against $g$ reveals that $\lambda>0$; up to scaling it can be taken equal to 1. Therefore, we find nonzero solutions of
\begin{itemize}
\item $g = \mathcal{T}(g,g,g)$  if $d=2$.
\item $ g + |\xi|^2 g = \mathcal{T}(g,g,g)$ if $3 \leq d \leq 5$.
\item $|\xi|^2 g = \mathcal{T}(g,g,g)$ if $d=6$.
\end{itemize}
We now turn to the proof of the theorem.

\begin{proof} The case $d=2$ was proved in~\cite{HZ,DF}, see also~\cite{FGH}; the case $d=6$ can be found in \cite{SS}. There remains $d = 3,4,5$, to which we now turn. It is more convenient to take the Fourier transform of the above and consider the variational problem
\begin{equation}
\label{var}
\sup_{\| f \|_{L^2}^2 + \| \nabla f \|_{L^2}^2 = 1} \left\| e^{it \Delta} f \right\|_{L^4_{t,x}}^4.
\end{equation}

\noindent \underline{Step 1: scaling of the problem.} For $A>0$, let
$$
I (A) = \sup_{\| f \|_{L^2}^2 + \| \nabla f \|_{L^2}^2 = A} \left\| e^{it \Delta} f \right\|_{L^4_{t,x}}^4.
$$
It is clear that
$$
I(A) = A^2 I(1).
$$

\noindent \underline{Step 2: profile expansion.}
As is well-known, the lack of compactness of this variational problem can be overcome through a profile expansion. The one needed for this particular variational problem is very similar to the one in~\cite{DHR}, and it can be proved along the same lines. The statement is as follows: consider $(f_n)$ a bounded sequence in $H^1$. Then there exists a subsequence, also denoted $(f_n)$, a second sequence $(\psi^j)$, and doubly indexed subsequences $(t_n^j)$ and $(x_n^j)$ giving for any $J$ the decomposition
$$
f_n (x) = \sum_{j=1}^J e^{it_n^j \Delta} f(x + x^j_n) + r^J_n.
$$
Furthermore
\begin{itemize}
\item The expansion is orthogonal in the Strichartz norm: 
$$
\lim_{J \to \infty} \operatorname{limsup}_{n \to \infty} \left[ \| e^{it\Delta} f_n \|_{L^4_{t,x}}^4 - \sum_{j=1}^J \| e^{it\Delta} \psi_j \|_{L^4_{t,x}}^4 \right] = 0.
$$
\item The expansion is orthogonal in $L^2$: for any $J$,
$$ 
\lim_{n \to \infty} \left[ \| f_n \|_{L^2}^2 - \sum_{j=1}^J \| \psi_j \|_{L^2}^2 - \| r^J_n \|_{L^2}^2 \right] = 0.
$$
\item The expansion is orthogonal in $\dot H^1$: for any $J$,
$$
\lim_{n \to \infty} \left[ \| \nabla f_n \|_{L^2}^2 - \sum_{j=1}^J \|\nabla \psi_j \|_{L^2}^2 - \| \nabla r^J_n \|_{L^2}^2 \right] = 0.
$$
\end{itemize}

\noindent \underline{Step 3: maximizing sequence.} Pick $(f_n)$ a maximizing sequence for the variational problem~\eqref{var}. Then, due to the orthogonality property in the Strichartz norm,
$$
I(1) = \lim_{n \to \infty} \| e^{it\Delta} f_n \|_{L^4_{t,x}}^4 = \sum_{j=1}^\infty \| e^{it\Delta} \psi_j \|_{L^4_{t,x}}^4.
$$
By the scaling property of the variational problem, the above implies that
$$
I(1) \leq \sum_{j=1}^\infty I( \| \psi_j \|_{L^2}^2 +  \| \nabla \psi_j \|_{L^2}^2) \leq I(1) \sum_{j=1}^\infty [ \| \psi_j \|_{L^2}^2 +  \| \nabla \psi_j \|_{L^2}^2]^2,
$$
which implies in turn that
$$
1 \leq \sum_{j=1}^\infty [ \| \psi_j \|_{L^2}^2 +  \| \nabla \psi_j \|_{L^2}^2]^2 \qquad \mbox{while} \qquad  \sum_{j=1}^\infty  \| \psi_j \|_{L^2}^2 +  \| \nabla \psi_j \|_{L^2}^2 \leq 1.
$$
This is only possible if only one of the $(\psi_j)$ is nonzero, say $\psi_1$. In other words, the maximizing sequence is compact (modulo symmetries), and $\psi_1$ is the desired maximizer.
\end{proof}

\subsection{Decay for the Euler-Lagrange problem}

\begin{proposition}[Fourier decay of g]
\begin{itemize}
\item[(i)] If $d=2$, a solution $g \in L^2$ of $g = \mathcal{T}(g,g,g)$ belongs to $L^{2,s}$ for all $s>0$.
\item[(ii)] If $3 \leq d \leq 5$, a solution $g \in L^{2,1}$ of $g + |\xi|^2 g = \mathcal{T}(g,g,g)$ belongs to $L^{2,s}$ for all $s>0$.
\end{itemize}
\end{proposition}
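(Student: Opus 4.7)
The plan is a bootstrap argument that combines the multilinear boundedness of $\mathcal T$ from Proposition~\ref{bddness} with the integral-equation structure. Case (ii) is essentially self-contained; case (i), in the absence of an elliptic factor, is the main obstacle.

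For case (ii), I would rewrite $g + |\xi|^2 g = \mathcal T(g,g,g)$ in elliptic form
\[
g = (1 + |\xi|^2)^{-1} \mathcal T(g,g,g),
\]
which immediately produces the key gain of two weight exponents:
\[
\|g\|_{L^{2, s+2}} \;\leq\; \|\mathcal T(g,g,g)\|_{L^{2, s}}.
\]
By Proposition~\ref{bddness}(ii), $\|\mathcal T(g,g,g)\|_{L^{2,s}} \lesssim \|g\|_{L^{2,s}}^3$ whenever $s \geq (d-2)/2$, so the implication $g \in L^{2,s} \Rightarrow g \in L^{2,s+2}$ propagates indefinitely and covers every $s > 0$.

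For $d = 3, 4$ the hypothesis $g \in L^{2,1}$ already satisfies $1 \geq (d-2)/2 \in \{1/2, 1\}$ and the bootstrap runs at once. For $d = 5$ the threshold is $3/2 > 1$, so an initial step is needed. I would take the inverse Fourier transform to obtain $(1 - \Delta)\check g = (2\pi)^{d-1} \int e^{-is\Delta}(|u|^2 u)\, ds$ with $u = e^{is\Delta}\check g$, and bound the right-hand side in $L^2$ by dual Strichartz at the admissible endpoint $(14/5, 14/5)$. Combined with the Strichartz--Sobolev bound $\|u\|_{L^{14/3}_{t,x}} \lesssim \|\check g\|_{\dot H^1}$ (involving one derivative loss), this yields
\[
\|\check g\|_{H^{2}} \;\lesssim\; \|u\|_{L^{14/3}_{t,x}}^3 \;\lesssim\; \|g\|_{L^{2,1}}^3,
\]
hence $g \in L^{2,2}$, above the threshold, and the elliptic bootstrap then closes.

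Case (i), $d = 2$, is the principal difficulty. The equation $g = \mathcal T(g,g,g)$ lacks the elliptic factor, and the same scheme collapses to the circular inequality $\|g\|_{L^{2,s}} \lesssim \|g\|_{L^2}^2\, \|g\|_{L^{2,s}}$ (Proposition~\ref{bddness}(ii) being valid already at $s = 0$ in dimension two), which only produces a lower bound on $\|g\|_{L^2}$. My approach would be a high--low frequency decomposition: since $\xi = \xi_1 - \xi_2 + \xi_3$ forces $\max_j|\xi_j| \geq |\xi|/3$, the trilinear $L^2$ bound gives the tail recursion $\|P_{>R}g\|_{L^2} \lesssim \|g\|_{L^2}^2\, \|P_{>R/3}g\|_{L^2}$, which under smallness of $\|g\|_{L^2}$ yields polynomial decay of the tail. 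Upgrading this to the full statement $g \in L^{2,s}$ for every $s$ appears to require the additional $d = 2$ symmetries of $\mathcal T$---Fourier invariance and commutation with the quantum harmonic oscillator---as exploited in~\cite{FGH, GHT1}.
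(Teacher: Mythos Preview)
Your proposal is correct and follows essentially the same route as the paper: the elliptic bootstrap $g\in L^{2,s}\Rightarrow \mathcal T(g,g,g)\in L^{2,s}\Rightarrow g\in L^{2,s+2}$ for $d=3,4$, a Strichartz--Sobolev starting step in $d=5$ to reach $g\in L^{2,2}$ before iterating, and deferral to~\cite{FGH} for $d=2$. The only cosmetic difference is the choice of exponents in the $d=5$ step: you use the symmetric pair (dual Strichartz in $L^{14/9}_{t,x}$ and the Sobolev--Strichartz bound $\|e^{is\Delta}\check g\|_{L^{14/3}_{t,x}}\lesssim\|\check g\|_{\dot H^1}$), whereas the paper pairs $\mathcal T(g,g,g)$ against $F\in L^2$ and uses $\|e^{is\Delta}\check g\|_{L^4_t L^5_x}^3\|e^{is\Delta}\check F\|_{L^4_t L^{5/2}_x}$---both give $\|\mathcal T(g,g,g)\|_{L^2}\lesssim\|g\|_{L^{2,1}}^3$.
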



\begin{proof} The case $d=2$ was treated in~\cite{FGH}. In fact, much more can be said there and one can prove exponential decay and analyticity of stationary solutions \cite{GHT1}.

If $d=3$, we know that $g \in L^{2,1}$. By Proposition~\ref{bddness}, this implies that $\mathcal{T}(g,g,g) \in L^{2,1}$. The equation satisfied by $g$ implies that $g \in L^{2,3}$, and iterating this argument gives the desired result. 

The case $d=4$ is similar.

If $d=5$, we will show by duality that $\| \mathcal{T}(g,g,g) \|_{L^2} \lesssim \| g \|_{L^{2,1}}^3$, from which the iterative process can be started. Indeed, using H\"older's inequality, the Strichartz estimate and Sobolev embedding, 
\begin{align*}
\left| \langle \mathcal{T}(g,g,g) \,,\, F \rangle \right| & = (2\pi)^{d-1} \left| \int_{\mathbb{R}}  \int_{\mathbb{R}^d} e^{is\Delta} F | e^{is\Delta} g |^2 e^{is\Delta} g \, dx \,ds \right| \\
& \lesssim \| e^{it\Delta} \check{g} \|_{L^4_t L^5_x}^3 \| e^{it\Delta} F \|_{L^4_t L^{5/2}_x}  \\
& \lesssim \| \check{g} \|_{H^1} \| F \|_{L^2} = \| g \|_{L^{2,1}}^3 \| F \|_{L^2}.
\end{align*}
\end{proof}

Although we don't pursue it here, we remark that the issue of regularity of those solutions is very interesting.


\noindent {\bf Acknowledgments.} TB was supported by the National Science Foundation  grant DMS-1600868.
PG was supported by the National Science Foundation  grant  DMS-1301380.
JS was  supported by the National Science Foundation  grant DMS-1363013.
ZH was supported by National Science Foundation grant DMS-1600561, a Sloan Fellowship, and a startup fund from Georgia Institute of Technology.

\vspace{.1in}

\end{document}